\def\A{\mathcal{A}}
\def\F{\mathcal{F}}
\def\I{\mathcal{I}}
\pgfmathsetmacro\Rad{3.8}
\pgfmathsetmacro\rad{3}
\pgfmathsetmacro\wid{.3}
\pgfmathsetmacro\smal{\rad-\wid}
\pgfmathsetmacro\larg{\rad+\wid}
\def\aa{47}
\def\ab{73}
\def\ac{77}
\def\ad{133}
\def\ae{136}
\def\af{154}
\newtheorem{theorem}{Theorem}
\newtheorem{lemma}{Lemma}
\newtheorem{conjecture}{Conjecture}
\newtheorem{problem}{Problem}
\theoremstyle{definition}
\newtheorem{example}{Example}
\definecolor{gren}{RGB}{  0,140, 10}
\title{An Erd\H os-Ko-Rado Theorem for unions of length 2 paths}
\date{}
\author{
Carl Feghali  \thanks{Computer Science Institute of Charles University, Prague, Czech Republic, email: \texttt{feghali.carl@gmail.com} } \quad  
Glenn Hurlbert \thanks{Department of Mathematics and Applied Mathematics, Virginia Commonwealth University, USA, email: \texttt{ghurlbert@vcu.edu} }\\ 
Vikram Kamat \thanks{Department of Mathematics \& Statistics, Villanova University, Villanova, PA, USA, email: \texttt{vikram.kamat@villanova.edu}}}
\begin{document}   
\maketitle

\begin{abstract}
A family of sets is intersecting if any two sets in the family intersect. Given a graph~$G$ and an integer $r\geq 1$, let $\I^{(r)}(G)$ denote the family of independent sets of size $r$ of~$G$. For a vertex $v$ of $G$, the family of independent sets of size $r$ that contain~$v$ is called an $r$-star. Then $G$ is said to be $r$-EKR if no  intersecting subfamily of $ \I^{(r)}(G)$ is bigger than the largest $r$-star. Let $n$ be a positive integer, and let $G$ consist of the disjoint union of $n$ paths each of length 2. We prove that if $1 \leq r \leq n/2$, then $G$ is $r$-EKR. This affirms a longstanding conjecture of Holroyd and Talbot for this class of graphs and can be seen as an analogue of a well-known theorem on signed sets, proved using different methods, by Deza and Frankl and by Bollob\'as and Leader. 

Our main approach is a novel probabilistic extension of Katona's elegant cycle method, which might be of independent interest.   
\end{abstract}

\section{Introduction}

The set $\{i \in \mathbb{N} \colon m \leq i \leq n\}$ is denoted by $[m,n]$, $[1,n]$ is abbreviated to $[n]$, and $[0]$ is taken to be the empty set $\emptyset$. For a set $X$, the \emph{power set of $X$} (that is, $\{A \colon A \subseteq X\}$) is denoted by $2^X$. 
The family of $r$-element subsets of $X$ is denoted by $\binom{X}{r}$. 
The family of $r$-element sets in a family $\mathcal{F}$ is denoted by $\mathcal{F}^{(r)}$. 
If $\mathcal{F} \subseteq 2^X$ and $x \in X$, then the family $\{A \in \mathcal{F} \colon x \in A\}$ is denoted by $\mathcal{F}(x)$ and called a \emph{star of $\mathcal{F}$} with \emph{centre} $x$. 
A family $\F$ is \emph{intersecting} if $F, F' \in \F$ implies $F \cap F' \not= \emptyset$. 

How large can a maximum-size intersecting family $\F \subseteq \binom{[n]}{r}$ be? If $2r > n$ then $|\F| = \binom{n}{r}$ is obvious, while if $2r \leq n$ the classical Erd\H{o}s--Ko--Rado (EKR) Theorem \cite{erdos} states that $\F$ can be no larger than a star. 

\bigskip

\noindent
\textbf{EKR Theorem} (Erd\H{o}s, Ko and Rado \cite{erdos}).
Let $n, r \geq 0$ be integers, $n \geq 2r$. Let $\F \subseteq \binom{[n]}{r}$ be intersecting. 
Then 
\begin{equation}\label{eq:ekr} |\F| \leq \binom{n - 1}{r - 1} = |\F(1)|.\end{equation}

\bigskip

When $n = 2r$, the proof of the EKR Theorem is easy: simply partition $\binom{[2r]}{r}$ into complementary pairs. Since $\F$ can contain at most one set from each pair, $|\F| \leq \frac{1}{2}\binom{2r}{r} = \binom{2r - 1}{r - 1}$. To deal with the case $n > 2r$ Erd\H{o}s, Ko and Rado \cite{erdos} introduced an important operation on families called \emph{shifting}.  



Let $\mathcal{I}_G$ denote the family of independent sets of $G$. 
The size of a maximum independent set of $G$ is denoted $\alpha(G)$. 
Holroyd and Talbot \cite{holroyd} introduced the problem of determining whether $\I^{(r)}_G$ has the star property  for a given graph $G$ and an integer $r \geq 1$. Following their terminology, graph $G$ is said to be $r$-EKR if no intersecting family of $\I^{(r)}_G$ is bigger than the largest star of $\I^{(r)}_G$.

Although not phrased in the language of graphs, one of the earliest results in the area, proved using different methods by Deza and Frankl \cite{deza} and by Bollob\'as and Leader \cite{leader}, was to show that if $G$ is the vertex-disjoint union of $n$ complete graphs each of size $k \geq 2$, then $G$ is $r$-EKR ($1 \leq r \leq n$). This result was extended in various ways \cite{signed1, signed2, signed3, signed4, signed5}. One such extension that is directly relevant to us is given by Hilton and Spencer \cite{spencer3, spencer2}, showing that if $G$ is the vertex-disjoint union of powers of cycles or of a power of a path and powers of cycles, then $G$ is $r$-EKR ($1 \leq r \leq \alpha(G)$), provided some condition on the clique number is satisfied (see \cite{borgfeghali} for short proofs with somewhat weaker bounds). The problem, however, of obtaining an EKR result for vertex-disjoint unions of (powers of) paths remained elusive. In this note, we make the first step towards this problem in the following theorem. 
 
\begin{theorem}\label{thm:main}
Let $2r \leq n$, and let $G$ be the vertex-disjoint union of $n$ paths each of length 2. Then $G$ is $r$-EKR. 
\end{theorem}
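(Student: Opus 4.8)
\section*{Proof proposal}

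The plan is to run Katona's cycle method in its averaging (expectation) form, but over a randomized cyclic structure tailored to the internal shape of the length-2 paths. First I would pin down the target. Writing $p(x)=1+3x+x^2$ for the independent-set generating polynomial of a single $P_3$ (the states $\emptyset,\{a_i\},\{b_i\},\{c_i\},\{a_i,c_i\}$ contribute $1,x,x,x,x^2$), the total number of independent $r$-sets of $G$ is $N_r=[x^r]\,p(x)^n$. Conditioning on the centre, a star centred at an endpoint has size $[x^{r-1}](1+x)\,p(x)^{n-1}$, whereas a star centred at a middle vertex has size $[x^{r-1}]p(x)^{n-1}$; since the extra factor $1+x$ only adds nonnegative terms, the largest star is centred at an endpoint and has size $s:=[x^{r-1}](1+x)\,p(x)^{n-1}$. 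The goal is then to show $|\F|\le s$ for every intersecting $\F\subseteq\I^{(r)}(G)$.

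The core is a probabilistic construction of arcs. I would place a ground set derived from the $n$ components on a cycle uniformly at random, together with an auxiliary random choice that locates each ``middle'' vertex $b_i$, engineered so that (i) each independent $r$-set occupies a region of total length $r$, the doubled state $\{a_i,c_i\}$ occupying two cells while every singleton occupies one, and (ii) the resulting notion of \emph{arc} (occupied cells forming a contiguous block) has the property that a fixed independent $r$-set is an arc with probability $p^{\ast}$ \emph{independent of its shape}. This uniformization is exactly the point of injecting extra randomness: unlike the classical setting, independent $r$-sets here are not mutually symmetric (they differ in how many middle vertices and how many doubled components they use), so a single deterministic cyclic permutation would assign them different arc-probabilities. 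Once $p^{\ast}$ is constant, linearity of expectation gives $\mathbb{E}\big[\#\{\text{arcs in }\F\}\big]=|\F|\,p^{\ast}$.

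The decisive and hardest step is the Katona-type local bound: for every fixed outcome of the random construction, any intersecting subfamily contains at most $M$ arcs, where $M=p^{\ast} s$, so that $|\F|\,p^{\ast}=\mathbb{E}[\#\text{arcs}]\le M$ yields $|\F|\le s$. Proving this per-outcome inequality is the main obstacle. In the classical cycle the arcs are cyclically ordered intervals and at most $r$ of them pairwise intersect; here the arcs carry internal structure coming from the doubled endpoints $\{a_i,c_i\}$ and from the special middle vertices $b_i$ (which intersect only copies of themselves), so I expect to need a careful case analysis of which arcs can pairwise meet, organised around a fixed ``anchor'' cell so that $M$ matches precisely the bundle of arcs through an endpoint. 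I anticipate that this is exactly where the hypothesis $2r\le n$ enters—guaranteeing enough slack on the cycle that no intersecting collection of arcs can exceed the anchored bundle—and where the averaging over the auxiliary randomness is essential to absorb the non-uniform internal shapes. Finally I would verify the extremal bookkeeping $M=p^{\ast}s$ and confirm equality is attained by the endpoint star, which also exhibits the construction as the natural $P_3$-analogue of the Deza--Frankl/Bollob\'as--Leader signed-sets theorem.
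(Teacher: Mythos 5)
Your high-level strategy---an averaging (probabilistic) version of Katona's cycle method, with extra randomness injected so that structurally different independent $r$-sets become ``arcs'' with equal probability---is exactly the paper's approach, and your identification of the target is correct: the largest star is centred at an endpoint and has size $[x^{r-1}](1+x)p(x)^{n-1}$, which matches the paper's $\sum_{s}\binom{n-1}{s}\binom{2n-2s-1}{r-s-1}=|\F(x_1)|$. However, the proposal defers precisely the steps that constitute the proof: you never exhibit the random cyclic structure, never verify the uniform arc-probability $p^{\ast}$, and you explicitly flag the per-outcome intersecting-arc bound as ``the main obstacle'' that you ``expect'' and ``anticipate'' can be handled. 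In the paper these are carried out by stratifying $\F$ into $\F_s=\{F\in\F:|F\cap Y|=s\}$, placing $L=X\cup Z$ on a circle via a random \emph{good} permutation (siblings diametrically opposite), putting the $Y$-part on a separate induced circle $\sigma'$, and drawing a shape from the interval-pair families $\mathcal{A}_s\cup\mathcal{B}_s$ with tuned weights $h(C)$ (including the $C,C^{\star}$ pairing for sets that contain some sibling pairs but not others). Note that there is no single global bound $M=p^{\ast}s$: the paper's conditional bound $(r-s)/(2n)$ varies with the stratum, and the star size emerges only after summing $\sum_{s}\frac{r-s}{2n}\binom{n}{s}\binom{2n-2s}{r-s}$.

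More fundamentally, the per-outcome bound you want is unobtainable without a compression step that your plan lacks. Two independent sets of $G$ can intersect \emph{only} in middle vertices, and such intersections are invisible to any overlap geometry on a cycle: all arcs containing a fixed middle vertex pairwise intersect no matter where they lie. Concretely, in the paper's setup the family of configurations containing a fixed $y_1$ within stratum $s$ occurs with conditional probability $s/n$, which exceeds the required bound $(r-s)/(2n)$ as soon as $3s>r$; so a ``middle-vertex star'' already defeats any Katona-type counting applied directly to an arbitrary intersecting family. The paper eliminates this obstruction \emph{before} the cycle method starts: Lemma~\ref{lem:shift} (shifting $y_i\mapsto x_i$) replaces $\F$ by an equal-sized intersecting family in which every two members intersect inside $L$, so only endpoint intersections need to be controlled, and then Lemma~\ref{lem:katona} applies to the $L$-parts. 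Without shifting, or some substitute mechanism for middle-vertex-only intersections, the ``decisive and hardest step'' of your outline cannot be carried out, so the proposal remains a plan rather than a proof.
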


We remark that Theorem \ref{thm:main} verifies a conjecture of Holroyd and Talbot \cite{holroyd2} for vertex-disjoint unions of length 2 paths. It is also the first case where the graph is not transitive, cf.\ \cite{wang}.  

Our most important message, however, is the technique used to establish Theorem \ref{thm:main}. Namely, we shall use the \textit{cycle method}, a technique first introduced by Katona \cite{katonaekr} in his beautiful proof of the EKR Theorem; however, some difficulties which are not present in \cite{katonaekr} must be dealt with. Roughly speaking, we combine the shifting technique with a weighted (or probabilistic) version of the cycle method by considering pairs of intervals  around the circle, instead of intervals, in order to account for the different types of vertices in the graph.

\section{Preliminaries}


Throughout the rest of the paper, let $n$ be a positive integer, and let $G$ be the vertex-disjoint union of $n$ paths $P_1, \dots, P_n$ each of length 2. For $i \in [n]$, the vertex set and edge set of $P_i$ are, respectively, $\{x_i, y_i, z_i\}$ and $\{x_iy_i, y_iz_i\}$. Define $X = \{x_i: 1 \leq i \leq n\}$, 
$Y = \{y_i: 1 \leq i \leq n\}$, 
$Z = \{z_i: 1 \leq i \leq n\}$, and 
$L = X \cup Z$.

To prove the theorem, we will use the powerful shifting technique, first introduced by Erd\H{o}s, Ko and Rado \cite{erdos}. A family $\F \subseteq \I^{(r)}_G$ is said to be \emph{shifted} if $F \in \F$ and  $y_i \in F \cap Y$ implies $(F \setminus \{y_i\}) \cup \{x_i\} \in \F$. 

Let $\phi_i: V(G) \rightarrow V(G)$ be the function given by $$
\phi_i(y_i) = x_i \mbox{ for $i \in [n]$,} \quad  \phi_i(v) = v \mbox{ otherwise.} 
$$
For $A \subseteq V(G)$ and $i\in [n]$, let 
$$\phi_i(A)=\{\phi_i(a): a\in A\},$$
and note that if $A$ is independent then so is $\phi_i(A)$. 

For a family $\F \subseteq \I^{(r)}_G$, let $\Phi_i: \I^{(r)}_G \rightarrow \I^{(r)}_G$ be given by
$$\Phi_i(\F) = \{\phi_i(A): A \in \F \} \cup \{A: A, \phi_i(A) \in \F\},$$
and let $\Phi(\F) = \Phi_1(\F) \circ \dots \circ \Phi_n(\F)$. Informally speaking, for each $A \in \F$ that contains vertex $y_i$, $\Phi_i(\F)$ replaces $A$ by the independent set $A^{'} = (A \setminus \{y_i\}) \cup \{x_i\}$ provided $A^{'}$ is not already in $\F$, and $\Phi(F)$ performs this operation for each $i\in [n]$. It is easy to see that $\Phi(\F)$ is shifted.  

\begin{lemma}\label{lem:shift}
Let $\F \subseteq \I^{(r)}_G$ be an intersecting family. Then $|\Phi(\F)| = |\F|$ and $A \cap B \cap L \not=\emptyset$ for all $A, B \in \Phi(\F)$. 
\end{lemma}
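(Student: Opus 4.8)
The plan is to treat the two conclusions separately: first I would establish that a single elementary shift $\Phi_i$ preserves both the cardinality and the intersecting property of a family, and then I would exploit the fact (noted just above the lemma) that $\Phi(\F)$ is shifted in order to push every intersection into the leaf set $L$.

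For the cardinality I would fix $i$ and partition $\F$ into three blocks according to the behaviour of $\phi_i$: the block $\C$ of sets avoiding $y_i$, the block $\B$ of sets $A$ with $y_i\in A$ and $\phi_i(A)\in\F$, and the block $\A$ of sets $A$ with $y_i\in A$ and $\phi_i(A)\notin\F$. Unravelling the definition of $\Phi_i$, one checks that $\Phi_i(\F)=\{\phi_i(A):A\in\A\}\cup\B\cup\C$: for $A\in\B$ the image $\phi_i(A)$ already lies in $\C$ and contributes nothing new, while each $A\in\A$ is replaced by the genuinely new set $\phi_i(A)$, which contains $x_i$, omits $y_i$, and lies outside $\F$. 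Since $A\mapsto\phi_i(A)$ is injective on $\A$ with image disjoint from $\B\cup\C$, this block is swapped for one of equal size, giving $|\Phi_i(\F)|=|\A|+|\B|+|\C|=|\F|$. As every set produced is again an independent $r$-set, the same equality is preserved along the composition $\Phi=\Phi_1\circ\cdots\circ\Phi_n$, whence $|\Phi(\F)|=|\F|$.

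Next I would show that each $\Phi_i$ preserves the intersecting property, arguing by cases on two members $C,D$ of $\Phi_i(\F)$ using the decomposition above. Two ``old'' sets in $\B\cup\C\subseteq\F$ intersect because $\F$ does, and two ``new'' sets $\phi_i(A_1),\phi_i(A_2)$ both contain $x_i$ and hence intersect. The delicate case, which I expect to be the main obstacle, is a new set $\phi_i(A)$ against an old set $D$: here the naive move of reversing the shift on $D$ fails, because replacing $x_i$ by $y_i$ can destroy independence exactly when $z_i$ is present, which is precisely the new phenomenon the path structure introduces. Instead, suppose $\phi_i(A)\cap D=\emptyset$; then $A\cap D=\{y_i\}$, so $y_i\in D$ forces $D\in\B$ and hence $\phi_i(D)\in\F$. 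Comparing $A$ with $\phi_i(D)$ inside the intersecting family $\F$, and using that $A$ is independent with $y_i\in A$ so that $x_i\notin A$, yields $A\cap\phi_i(D)=\emptyset$, a contradiction. Induction along the composition then gives that $\Phi(\F)$ is intersecting.

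Finally, for the statement $A\cap B\cap L\neq\emptyset$ I would argue by a minimal counterexample. Suppose some pair $A,B\in\Phi(\F)$ has $A\cap B\cap L=\emptyset$, and choose such a pair with $|A\cap B|$ minimum. Intersectingness gives $A\cap B\neq\emptyset$, and $A\cap B\cap L=\emptyset$ forces $\emptyset\neq A\cap B\subseteq Y$, so I may pick $y_i\in A\cap B$. Since $\Phi(\F)$ is shifted and $y_i\in A$, the set $A^{*}=(A\setminus\{y_i\})\cup\{x_i\}$ lies in $\Phi(\F)$; and since $y_i\in B$ with $B$ independent we have $x_i\notin B$, so $A^{*}\cap B=(A\cap B)\setminus\{y_i\}$ is a proper subset of $A\cap B$ still contained in $Y$. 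If it is empty this contradicts intersectingness, and otherwise $(A^{*},B)$ is a strictly smaller counterexample, contradicting minimality. The same mechanism rules out any set lying entirely in $Y$, which also disposes of the degenerate pair $A=B$, and this completes the three ingredients needed for the lemma.
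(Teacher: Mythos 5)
Your proposal is correct and takes essentially the same route as the paper's proof: a per-index case analysis showing each $\Phi_i$ preserves both cardinality and the intersecting property (with the same key move of forward-shifting the old set containing $y_i$ and comparing it with the preimage of the new set inside $\F$), followed by using the shiftedness of $\Phi(\F)$ to rule out intersections lying entirely in $Y$. The only cosmetic differences are that you handle the old-versus-new case by contradiction rather than by the paper's direct computation, and you remove the common $Y$-elements one at a time via a minimal counterexample where the paper shifts them all at once.
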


\begin{proof}
The proof is completely standard, but we include the details for completeness. We first demonstrate that $\Phi_i(\F)$ is intersecting for $i \in [n]$. 

Let $A, B \in \Phi_i(\F)$. 
If $A, B \in \F$, then $A \cap B \not=\emptyset$ since $\F$ is intersecting.   
If $A, B \in \Phi_i(\F) - \F$, then by definition $x_i \in A \cap B$. 
So we can assume that $A \in \Phi_i(\F) \cap \F$ and $B \in \Phi_i(\F) - \F$. 
Then $B = (C - \{y_i\}) \cup \{x_i\}$ for some $C \in \F$ and either $y_i \not\in A$ or 
$D = (A - \{y_i\}) \cup \{x_i\}$ for some $D \in \F$. If $y_i \not\in A$, 
then $A \cap B = A \cap ((C - \{y_i\}) \cup \{x_i\}) \supseteq A \cap C \not=\emptyset$ 
since $A, C \in \F$. Finally, if $D \in \F$, then $\emptyset \not=C \cap D= ((B - \{x_i\}) \cup \{y_i\})  \cap ((A - \{y_i\}) \cup \{x_i\})  = A \cap B$. This shows that $\Phi_i(\F)$ is intersecting for $i \in [n]$. 

By definition, $|\Phi(\F)| = |\F|$. Since each $\Phi_i(\F)$ is intersecting, $\Phi(\F)$ is intersecting.  We are left to show that $A \cap B \cap L \not=\emptyset$ for all $A, B \in \Phi(\F)$. Suppose that $A \cap B = \{y_{j_1}, \dots, y_{j_t}\} \subseteq G \setminus L$ for some $1 \leq t \leq n$. By definition, $A' = (A \setminus \{y_{j_1}, \dots, y_{j_t}\}) \cup \{x_{j_1}, \dots, x_{j_t}\}$ is a member of $\Phi(\F)$. But then $A' \cap B = \emptyset$, contradicting that $\Phi(\F)$ is intersecting. 
\end{proof}

Another tool in our proof of the theorem is the cycle method of Katona. Before we can concisely use the method in the next section, here we only make a few definitions and prove a lemma. 
We shall find it convenient to represent $L$ by $[2n]$, by representing the vertex $x_i$ by the integer $i$ and $z_i$ by $n + i$ for $i \in [n]$.
We use the permutation notation $\sigma=\langle a_1,...,a_{2n}\rangle$ to mean that the elements of $L$ are listed in the order $a_1, ...,a_{2n}$ around the circle. 
In this case we write $\sigma(i)=a_i$.
For $j \geq 1$, $M \subseteq L$, and a permutation $\sigma$ of $L$, let 
\begin{equation}
\label{e:sigmaMj}
    _{\sigma}M^j = \{ \sigma(c + j): c \in M\}.
\end{equation}
\begin{example}
Let $L = \{1, 2, 3, 4, 5, 6\}$, $\sigma = \langle 3, 5, 6, 1, 2, 4\rangle$ and $M = \{1, 4, 5\}$. Then one has, for example,
${_{\sigma}M^1} = \{\sigma(1 + 1), \sigma(4 + 1), \sigma(5 + 1)\} = \{5, 2, 4\}$ and 
${_{\sigma}M^3} = \{\sigma(1 + 3), \sigma(4 + 3), \sigma(5 + 3)\} =  \{1, 3, 5\}$. 
\end{example}

Set $_{\sigma}M^0 = {_{\sigma}M}$ and ${_{\langle 1, \dots, 2n\rangle}M} = M$.  Following Bollob\'as and Leader \cite{leader}, call a permutation $\sigma$ of $L$ \emph{good} if $x_i$ and $z_i$ are diametrically opposite on the circle.    Equivalently, $\sigma$ is good if  any $n$ elements $a_1, \dots, a_n$ in $L$ appearing consecutively in $\sigma$ do not contain both $x_i$ and $z_i$ for each $i \in [n]$. 
(For instance, the permutation $\langle 1, \dots, 2n\rangle$ is a good permutation of $L$.)



We now prove a lemma analogous to Katona's lemma in his proof of the EKR Theorem. 
  
For integers $s \geq 0$ and $t, u \geq 1$, define the pairwise disjoint intervals 
\begin{gather*}
S = [1, \dots, s], \\
T_1 = [s + 1, \dots, s + t], \\
U_1 = [s + 1 + n, \dots, s + u + n], \\
T_2 =  [s+u+n+1, \dots, s+u+n+t],\\
U_2 = [s + t + 1, \dots, s + t + u].
\end{gather*}


Let  $C_i(t, u) = T_i \cup U_i $ for $i \in \{1, 2\}$. 
For a good permutation $\sigma$ of $L$, recall the notation of equation (\ref{e:sigmaMj}) and define 
$$_{\sigma}\mathcal{C}(t, u) = \{_{\sigma}C_1^j(t, u): 1 \leq j \leq 2n\} \cup \{_{\sigma}C_2^j(t, u): 1 \leq j \leq 2n\}.$$ 


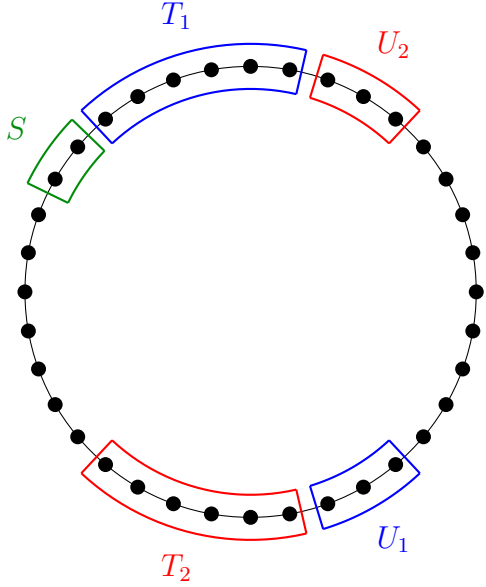
\begin{figure}
\centering
\begin{tikzpicture}
    \foreach \x in {0,...,35} 
        \draw ({\rad*cos(10*\x)},{\rad*sin(10*\x)}) node[circle,fill=black,inner sep=2pt] {};
    \draw circle (\rad cm);

\draw [red,thick,domain=\aa:\ab] plot ({\larg*cos(\x)}, {\larg*sin(\x)});
\draw [red,thick,domain=\aa:\ab] plot ({\smal*cos(\x)}, {\smal*sin(\x)});
\draw [red,thick,domain=\smal:\larg] plot ({\x*cos(\aa)}, {\x*sin(\aa)});
\draw [red,thick,domain=\smal:\larg] plot ({\x*cos(\ab)}, {\x*sin(\ab)});
\draw [red] ({\Rad*cos((\aa+\ab)/2)},{\Rad*sin((\aa+\ab)/2)}) node[label=center:$U_2$] {};

\draw [blue,thick,domain=\ac:\ad] plot ({\larg*cos(\x)}, {\larg*sin(\x)});
\draw [blue,thick,domain=\ac:\ad] plot ({\smal*cos(\x)}, {\smal*sin(\x)});
\draw [blue,thick,domain=\smal:\larg] plot ({\x*cos(\ac)}, {\x*sin(\ac)});
\draw [blue,thick,domain=\smal:\larg] plot ({\x*cos(\ad)}, {\x*sin(\ad)});
\draw [blue] ({\Rad*cos((\ac+\ad)/2)},{\Rad*sin((\ac+\ad)/2)}) node[label=center:$T_1$] {};

\draw [gren,thick,domain=\ae:\af] plot ({\larg*cos(\x)}, {\larg*sin(\x)});
\draw [gren,thick,domain=\ae:\af] plot ({\smal*cos(\x)}, {\smal*sin(\x)});
\draw [gren,thick,domain=\smal:\larg] plot ({\x*cos(\ae)}, {\x*sin(\ae)});
\draw [gren,thick,domain=\smal:\larg] plot ({\x*cos(\af)}, {\x*sin(\af)});
\draw [gren] ({\Rad*cos((\ae+\af)/2)},{\Rad*sin((\ae+\af)/2)}) node[label=center:$S$] {};

\draw [blue,thick,domain=360-\ab:360-\aa] plot ({\larg*cos(\x)}, {\larg*sin(\x)});
\draw [blue,thick,domain=360-\ab:360-\aa] plot ({\smal*cos(\x)}, {\smal*sin(\x)});
\draw [blue,thick,domain=\smal:\larg] plot ({\x*cos(360-\aa)}, {\x*sin(360-\aa)});
\draw [blue,thick,domain=\smal:\larg] plot ({\x*cos(360-\ab)}, {\x*sin(360-\ab)});
\draw [blue] ({\Rad*cos(360-((\aa+\ab)/2))},{\Rad*sin(360-((\aa+\ab)/2))}) node[label=center:$U_1$] {};

\draw [red,thick,domain=360-\ad:360-\ac] plot ({\larg*cos(\x)}, {\larg*sin(\x)});
\draw [red,thick,domain=360-\ad:360-\ac] plot ({\smal*cos(\x)}, {\smal*sin(\x)});
\draw [red,thick,domain=\smal:\larg] plot ({\x*cos(360-\ac)}, {\x*sin(360-\ac)});
\draw [red,thick,domain=\smal:\larg] plot ({\x*cos(360-\ad)}, {\x*sin(360-\ad)});
\draw [red] ({\Rad*cos(360-((\ac+\ad)/2))},{\Rad*sin(360-((\ac+\ad)/2))}) node[label=center:$T_2$] {};

\end{tikzpicture}
\caption{The sets $S$, $T_1$, $U_1$, $T_2$, and $U_2$, with $s=2$, $t=6$, and $u=3$, where $n=18$.
$C_1(6,3)$ is in blue and $C_2(6,3)$ is in red.}
\label{circle_pic}
\end{figure}

\begin{example}
Consider the parameter values in Figure \ref{circle_pic}, and let
\begin{eqnarray*}
\sigma=
&\langle 5, 8,21, 6,20, 1,11,14,36,22,10,34,30,27, 7,15,31,17,\\
& 23,26, 3,24, 2,19,29,32,18, 4,28,16,12, 9,25,33,13,35\rangle.
\end{eqnarray*}
Then ${_\sigma}\mathcal{C}(6,3)$ consists of the following sets.
$$\begin{array}{rclrcl}
{_\sigma}C_1^1&=&\{ 6,20, 1,11,14,36,  24, 2,19\}&
{_\sigma}C_2^1&=&\{29,32,18, 4,28,16,  22,10,34\}\\
{_\sigma}C_1^2&=&\{20, 1,11,14,36,22,   2,19,29\}&
{_\sigma}C_2^2&=&\{32,18, 4,28,16,12,  10,34,30\}\\
{_\sigma}C_1^3&=&\{ 1,11,14,36,22,10,  19,29,32\}&
{_\sigma}C_2^3&=&\{18, 4,28,16,12, 9,  34,30,27\}\\
&\cdot&&&\cdot&\\
&\cdot&&&\cdot&\\
&\cdot&&&\cdot&\\
{_\sigma}C_1^{36}&=&\{21, 6,20, 1,11,14,   3,24, 2\}&
{_\sigma}C_2^{36}&=&\{19,29,32,18, 4,28,  36,22,10\}\\
\end{array}$$
\end{example}

\begin{lemma}\label{lem:katona}
Let $n, t, u \geq 0$ be integers such that  $t \geq u $. Let $\sigma$ be a good permutation of $L$.  For any intersecting family $\mathcal{B} \subseteq {\mathcal{C}(t, u)} := {_\sigma {\mathcal{C}(t, u)}}$, 

\begin{itemize}
\item[(i)]
$|\mathcal{B}| \leq t$ and  $|{_\sigma \mathcal{C}(t, u)}| = 2n$ if $u = 0$ and and $n \geq t$;
\label{lem:kat1}

\item[(ii)]
$|\mathcal{B}| \leq t$ and  $|{_\sigma \mathcal{C}(t, u)}| = n$ if $t = u$ and $n \geq 2t$;
\label{lem:kat2}

\item[(iii)]
$|\mathcal{B}| \leq 2(t + u)$ and  $|{_\sigma \mathcal{C}(t, u)}| = 4n$ if $t > u \geq 1$ and $n \geq 2(t + u)$. 
\label{lem:kat3}

\end{itemize}

\end{lemma}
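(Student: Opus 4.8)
The plan is to discard the permutation $\sigma$ and argue entirely with cyclic arcs on $\mathbb{Z}_{2n}$, identifying position $i$ on the circle with the element $\sigma(i)\in L$. Because $\sigma$ is a bijection, the sets $_\sigma C_i^j(t,u)$ and $_\sigma C_{i'}^{j'}(t,u)$ meet if and only if the underlying shifted index sets meet, so every intersection question becomes one about translates of the two shapes $C_1(t,u)$ and $C_2(t,u)$ in $\mathbb{Z}_{2n}$ (the offset $s$ is translation data and plays no role). Each shape is a long arc of length $t$ together with a short arc of length $u$ placed antipodally to one end of the long arc, and $_\sigma\mathcal{C}(t,u)$ is the collection of all $2n$ translates of each. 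The good-permutation hypothesis enters only to make the two arcs of a shape genuinely antipodal. The cardinality claims will then follow by recording the coincidences $C_2=C_1+n$ when $u=0$, and $C_2=C_1+t$ together with the $(+n)$-invariance of $C_1$ when $t=u$, and by checking that otherwise all $4n$ translates are distinct; the latter uses $n\ge 2(t+u)$ to rule out wrap-around.

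For part (i) the short arcs vanish, both shapes equal a single length-$t$ arc, and the family is exactly the set of all $2n$ length-$t$ cyclic arcs; since $n\ge t$, antipodal arcs are disjoint, and the classical Katona pairing (pair each arc with the one obtained by shifting it by $t$) gives $|\mathcal{B}|\le t$. For part (ii) each shape is a pair of antipodal length-$t$ arcs, hence invariant under the shift by $n$; quotienting $\mathbb{Z}_{2n}$ by this involution maps each shape to one length-$t$ arc of $\mathbb{Z}_n$, and because a shape is a full antipodal pair, two shapes meet in $\mathbb{Z}_{2n}$ precisely when their images meet in $\mathbb{Z}_n$. As $n\ge 2t$, Katona on $\mathbb{Z}_n$ again yields $|\mathcal{B}|\le t$.

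Part (iii) is the crux. I would write $P_1(a)$, $P_2(a)$ for the type-$1$, type-$2$ shapes whose common long arc starts at $a$, the two types attaching their short arc antipodally to the front, respectively the back, of that long arc. The decisive computation is that of the difference sets modulo $2n$. One finds $D_{11}=D_{22}=C_1-C_1=[-(t-1),t-1]\cup[n-t+1,n+t-1]$, two clusters of length $2t-1$ about $0$ and $n$ (notably independent of $u$), whereas the cross-type set $D_{21}=C_2-C_1=[-(t-1),t-1]\cup[n-u+1,n+t-1]$ has a shorter, asymmetric near-$n$ cluster of length $t+u-1$ (and $D_{12}=-D_{21}$). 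Since $P_1(a)$ and $P_2(a')$ meet iff $a-a'\in D_{21}$, it is exactly this asymmetry that will pull the bound down from the naive $4t$ to $2(t+u)$; as a sanity check, the star of all shapes through a fixed point has precisely $2(t+u)$ members and saturates the relevant inequality.

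Given the difference sets the finish is short. Fixing one member of $\mathcal{B}$ as a base point $p$, every member has parameter within $t-1$ of $p$ or of $p+n$ (all difference sets lie in the two clusters of $D_{11}$), so the parameters split into offset sets $X_1,Y_1$ for type $1$ near $p$ and $p+n$, and $X_2,Y_2$ for type $2$. Each set has diameter at most $t-1$ by the near-$0$ cluster, while the asymmetric near-$n$ cluster of $D_{21}$ becomes $-(u-1)\le x-y'\le t-1$ for $x\in X_1,\,y'\in Y_2$, and likewise $-(u-1)\le y-x'\le t-1$ for $y\in Y_1,\,x'\in X_2$. The first pair forces $Y_2\subseteq[\max X_1-t+1,\ \min X_1+u-1]$, whence $|X_1|+|Y_2|\le(\max X_1-\min X_1+1)+(\min X_1-\max X_1+t+u-1)=t+u$; symmetrically $|X_2|+|Y_1|\le t+u$, and adding gives $|\mathcal{B}|=|X_1|+|Y_1|+|X_2|+|Y_2|\le 2(t+u)$. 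The main obstacle I anticipate is purely the bookkeeping of part (iii): pinning down the correct orientation of the cross-type condition (it is $D_{21}$, not $D_{12}$, that governs $P_1(a)\cap P_2(a')$) and verifying the clusters neither overlap nor wrap, which is where $n\ge 2(t+u)$ is needed and which simultaneously secures the distinctness behind $|_\sigma\mathcal{C}(t,u)|=4n$.
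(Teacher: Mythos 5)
Parts (i) and (ii) of your proposal are fine: (i) is the paper's argument, and your quotient-to-$\mathbb{Z}_n$ phrasing of (ii) is a correct variant of the paper's ``follows as in (i)''. But part (iii) --- which you rightly call the crux --- has a genuine gap: when you translate the intersection constraints into linear inequalities on the recentred offsets, you keep only one cluster of each difference set and silently discard the wrap-around cluster. That is only legitimate when the wrap-around cluster is out of reach of the offsets, i.e.\ when $2t-2 < n-t+1$, that is $n \ge 3t-2$; the hypothesis $n \ge 2(t+u)$ is strictly weaker than this whenever $t \ge 2u+3$. Concretely, take $t=5$, $u=1$, $n=12=2(t+u)$, and (in your notation, indices mod $24$) the four sets $P_1(0)=[0,4]\cup\{12\}$, $P_1(4)=[4,8]\cup\{16\}$, $P_1(-4)=[20,24]\cup\{8\}$, and $P_2(8)=[8,12]\cup\{0\}$. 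These pairwise intersect (e.g.\ $P_1(4)\cap P_1(-4)=\{8\}$ and $P_1(4)\cap P_2(8)=\{8\}$), so with base $p=0$ you get $X_1\supseteq\{-4,0,4\}$ and $Y_2\ni -4$. This kills both of your key claims: $X_1$ has diameter $2t-2=8$, not at most $t-1=4$, and your inclusion $Y_2\subseteq[\max X_1-t+1,\,\min X_1+u-1]=[0,-4]$ would force $Y_2=\emptyset$. The culprit is exactly the dropped cluster: for $x=4\in X_1$ and $y'=-4\in Y_2$ we have $x-y'=8\notin[-(u-1),t-1]=[0,4]$, yet the two sets do meet, because $x-y'$ lies in the discarded interval $[n-t+1,n+t-1]=[8,16]$. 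So the chain $|X_1|+|Y_2|\le t+u$ is not established, and your argument as written only proves the lemma under the stronger hypothesis $n\ge 3t-2$.

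The paper's proof of (iii) is built to sidestep precisely this interaction. It passes to the unions $D^i=C_1^i\cup C_2^i$, which are antipodal arc-pairs with both arcs of length $t+u$, notes that $D^i=D^j$ iff $j=i+n$, applies part (ii) with parameter $t+u$ (this is exactly where $n\ge 2(t+u)$ is used) to conclude that $\mathcal{B}$ meets at most $t+u$ distinct unions, and then observes that among the four sets $C_1^i, C_2^i, C_1^{n+i}, C_2^{n+i}$ producing a given union, an intersecting family can keep at most two, since $C_1^i\cap C_2^i=\emptyset$ and $C_1^{n+i}\cap C_2^{n+i}=\emptyset$. Grouping the near-$p$ and near-$(p+n)$ positions together in this way automatically absorbs the wrap-around intersections that break your separate interval bookkeeping; repairing your approach would essentially force you to reinvent this grouping.
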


\begin{proof}
Since $\sigma$ is fixed we will write $C_i^j(t,u)$ in place of ${_\sigma}C_i^j(t,u)$ for visual simplicity.

(i) This case is Katona's lemma. Clearly, ${C_1^j(t, 0)} = {C_2^{j + n}(t, 0)}$ and hence $\mathcal{C}(t, 0) = \{C_1^j(t, 0): 1 \leq j \leq 2n\}$, which implies $|{\mathcal{C}(t, u)}| = 2n$. Now assume without loss of generality that $C_1^1(t, 0) \in \mathcal{B}$. All the other sets $C_1^i(t, 0)$ that intersect $C_1^1(t, 0)$ can be partitioned into disjoint pairs $({C_1^i(t, 0)}, {C_1^{i - t}(t, 0))}$ for $i \in [2, t]$ since $n \geq t$. Since $\mathcal{B}$ is intersecting, it can contain at most one set from each pair. Hence $|\mathcal{B}| \leq t$. 

(ii) Clearly, $C_1^j(t, t) = {C_2^{j - t}(t, t)} = {C_1^{j + n}(t, t)}$ and hence $\mathcal{C}(t, t) = \{C_1^j(t, t): 1 \leq j \leq n\}$, which implies $|{\mathcal{C}(t, u)}| = n$. Since $n \geq 2t$, it follows as in (i) that $|\mathcal{B}| \leq t$. 

(iii) Clearly,  ${C_1^i(t, u)} \not= {C_2^j(t, u)}$ for any $1 \leq i, j \leq 2n$, implying  $|{\mathcal{C}(t, u)}| = 4n$. Now let $D^i = {C_1^i(t, u)} \cup {C_2^i(t, u)}$. Then \begin{equation}\label{eq:d}
D^i = D^j \mbox{ if and only if } j = n + i. 
\end{equation}

Define the family 
$$\mathcal{D}' = \{D^i: {C_j^i(t, u)} \in \mathcal{B} \mbox{ for some } j \in \{1, 2\}\}.$$
Then $\mathcal{D}'$ is intersecting since $\mathcal{B}$ is intersecting. Since $n \geq 2(t + u)$, it follows by (ii) that $|\mathcal{D}'| \leq t + u$. Now  $\mathcal{B}$ can contain at most two of the sets in $\{{C_1^i(t, u)}, {C_2^i(t, u)}, {C_1^{n + i}(t, u)}, {C_2^{n + i}(t, u)} \}$ since it is intersecting. By (\ref{eq:d}), $|\mathcal{B}| \leq 2|\mathcal{D}'| \leq 2(t + u)$ as required. 
\end{proof}




\section{The proof of Theorem \ref{thm:main}}
Before we begin the proof of the theorem, we introduce some additional notation. For a vertex $\ell \in L$, we let $\zeta(\ell) \in Y$ denote the unique neighbour of $\ell$ in $G$. We say that $x_i$ and $z_i$ are \emph{siblings}. Given a good permutation $\sigma = \langle a_1, \dots, a_{2n}\rangle$ of $L$, we let $\sigma' = \langle\zeta(a_1), \dots,\zeta(a_{2n})\rangle$. We say that an independent set $A$ of $G$ is \begin{itemize}
\item of \emph{type I} if whenever $A \cap L$ contains an element of $X \cup Z$, then it does  not contain its sibling,
\item of \emph{type II} if whenever $A \cap L$ contains an element of $X \cup Z$, then it contains its sibling, and
\item of \emph{type III} in all other cases.   
\end{itemize}
Let $\F \subseteq \I^{(r)}_G$ be an intersecting family.  By Lemma \ref{lem:shift}, to prove the theorem we can assume that $\F$ is shifted.  

For $0 \leq s \leq r$,  define $\I^{(r)}_G(s) = \{A \in \I^{(r)}_G: |A \cap Y| = s \}$ and
$\F_s = \{F \in \F: |F \cap Y| = s\}$.
Since $\F$ is shifted,  $\F_r = \emptyset$ and so
$|\F| = \sum_{s = 0}^{r - 1} |\F_s|.$ 
Let us now try to  bound the size of each $\F_s$. 

Define the families
\begin{gather*}
\mathcal{A}_s = {\bigcup_{t \in [ \lfloor \frac{r - s}{2} \rfloor + 1, \dots, r - s - 1]} \Big\{ C_1(t, r - s - t)\}, C_2(t, r- s- t)\Big\}}, \\
\mathcal{B}_s =\bigg\{\begin{array}{ll} \{C_1(r - s, 0)\} \cup \{ C_1(\frac{r - s}{2}, \frac{r - s}{2})\} & \mbox{ if $r - s$  is even, }\\ \{C_1(r - s, 0)\},& \mbox{otherwise,}\end{array}
\end{gather*}

Now, choose an index $i \in [2n]$ and a good permutation $\sigma$ of $L$ uniformly and independently at random and  a member $C \in \mathcal{A}_s \cup \mathcal{B}_s$ with probability \begin{equation*}\label{eq:prob}
h(C) := f(C) \Big/ \sum_{D \in \mathcal{A}_s \cup \mathcal{B}_s} f(D),
\end{equation*} where $f$ will be determined later on. 
Recalling again the notation in definition (\ref{e:sigmaMj}), we set 
$$
I ={_{\sigma'}S^i} \cup  {_{\sigma}C^{i}}.
$$

Clearly $I \in \mathcal{I}^{(r)}_G(s)$. To ensure that $I$ is uniformly chosen from $\mathcal{I}^{(r)}_G(s)$, consider arbitrary $K \in \mathcal{I}^{(r)}_G(s)$. What is the probability that $I = K$? It is the probability that $ {_{\sigma}C^{i}} = K \cap L$ and ${_{\sigma'}S^i} = K \cap Y$. In particular, let $k_1 = |K \cap L| = r - s$ and let $k_2$ be the number of pairs of siblings in $K \cap L$. Then $ {_{\sigma}C^{i}} = K \cap L$ only when $k_2$ is the number of pairs of siblings in $C$: there is precisely one such $C$ in the situation when $k_1 = 2k_2$ or $k_2 = 0$ and one such $C$ together with its `complement' $C^\star$ in all other cases, where $C^*=C_{3 - i}(t, u)$ when $C=C_{i}(t, u)$. 
Hence, with $\mathcal{L}$ equal to the set of all good permutations of $L$, conditioning on $i$ the former probability is equal to
$$\frac{(k_1 - k_2)!(n - k_1 + k_2)!}{|\mathcal{L}|}\cdot h(C) \mbox{ if $K$ is of type I or II, and}$$ 
$$\frac{(k_1 - k_2)!(n - k_1 + k_2)! }{|\mathcal{L}|} \cdot \bigg(h(C)+ h(C^\star)\bigg) \mbox{ if $K$ is of type III}$$  while the latter probability is invariably equal to $$\frac{2^s(n - s)!s!}{|\mathcal{L}|},$$ 
Therefore, by taking $f(C) = \frac{1}{(k_1 - k_2)!(n - k_1 + k_2)!}$ if $I$ is of type I or II and $f(C) = f(C^{\star}) = \frac{1}{2(k_1 - k_2)!(n - k_1 + k_2)!}$ if $I$ is of type III, we have that $I$ is uniformly chosen from $\mathcal{I}^{(r)}_G(s)$. This means that
\begin{equation}\label{eq:f}
\Pr[I \in \F_s] = \frac{|\F_s|}{|\mathcal{I}^{(r)}_G(s)|} = \frac{|\F_s|}{\binom{n}{s}\binom{2n - 2s}{r - s}}.
\end{equation}

We next turn to estimating $\Pr[I \in \F_s]$ in another way.  For a good permutation $\sigma$ of $L$ and special sets $C_1, \dots, C_t$, write
$$\delta(I, \sigma, C_1, \dots, C_t): = \Pr[I \in \F_s \mid \sigma \land (C_1 \lor \dots \lor C_t)] \cdot \Pr[\sigma \land (C_1 \lor \dots \lor C_t)].$$

Using Lemma \ref{lem:shift} and either Lemma \ref{lem:katona}(i) or Lemma \ref{lem:katona}(ii), for each member $B$ of $\mathcal{B}_s$ and each good permutation $\sigma$ of $L$,
\begin{equation}\label{eq:fff1}
\Pr[I \in \F_s \mid \sigma \land B] \leq \frac{r - s}{2n}.\end{equation}

Similarly, using Lemma \ref{lem:shift} in conjunction with Lemma \ref{lem:katona}(iii), for each member $A$ of $\mathcal{A}_s$ and good permutation $\sigma$ of $L$,
\begin{equation}\label{eq:fff2}
\Pr[I \in \F_s \mid \sigma \land (A \lor A^\star)] \leq \frac{2(r - s)}{4n} = \frac{r - s}{2n}.\end{equation} 

Let $\mathcal{A}'_s = \{(A, A^{\star}): A, A^{\star} \in \A_s\}$. By (\ref{eq:fff1}), (\ref{eq:fff2}) and the law of total probability, we have that
\begin{eqnarray}\label{eq:delta}
\Pr[I \in \F_s] 
&=&\sum_{\sigma \in \mathcal{L}, B \in \mathcal{B}_s}  \delta(I, \sigma, B) +  \sum_{\sigma \in \mathcal{L}, (A, A^{\star}) \in \A'_s} \delta(I, \sigma, A, A^\star) \nonumber \\
&\leq&\frac{r - s}{2n}\Bigg( \sum_{\sigma \in \mathcal{L}, B \in \mathcal{B}_s} \Pr[\sigma \land B] +  \sum_{\sigma \in \mathcal{L}, (A, A^{\star}) \in \A'_s} \Pr[\sigma \land (A\lor A^\star)]\Bigg) \nonumber \\
&=&\frac{r - s}{2n}\Bigg( \sum_{\sigma \in \mathcal{L}, B \in \mathcal{B}_s} \Pr[\sigma \land B] +  \sum_{\sigma \in \mathcal{L}, A \in \mathcal{A}_s} \Pr[\sigma \land A]\Bigg) \nonumber \\
&=& \frac{r - s}{2n}\sum_{\sigma \in \mathcal{L}, C \in \mathcal{A}_s \cup \mathcal{B}_s} \Pr[\sigma \land C]  \nonumber \\ &=& \frac{r - s}{2n}. 
\end{eqnarray}

Combining (\ref{eq:f}) and (\ref{eq:delta}) immediately yields the result, as follows. 

\begin{align*}
    |\F|
    &= \sum_{s=0}^{r-1} |\F_s|\\
    &\le \sum_{s=0}^{r-1}\frac{r-s}{2n}\binom{n}{s}\binom{2n-2s}{r-s}\\
    &= \sum_{s=0}^{r-1}\binom{n-1}{s}\binom{2n-2s-1}{r-s-1}\\
    &=|\F(x_1)|. \qed
\end{align*}

\section{Concluding Remarks}

Our bound on $r$ in Theorem \ref{thm:main} is probably not optimal. In view of the EKR theorem, we make the following conjecture. 

\begin{conjecture}
Let $r \leq n$, and let $G$ be the vertex-disjoint union of $n$ paths each of length $2$. Then $G$ is $r$-EKR. 
\end{conjecture}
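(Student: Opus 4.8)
The plan is to retain the entire architecture of the proof of Theorem~\ref{thm:main} and to pinpoint exactly where the hypothesis $2r \le n$ (as opposed to the conjectured $r \le n$) is consumed, then to eliminate it there. First I would invoke Lemma~\ref{lem:shift} to assume $\mathcal{F}$ is shifted, partition $\mathcal{F} = \bigcup_{s=0}^{r-1} \mathcal{F}_s$ according to $s = |F \cap Y|$, and run the same probabilistic cycle method: sample $I \in \mathcal{I}^{(r)}_G(s)$ uniformly through a random index $i$, a random good permutation $\sigma$, and a random special configuration $C \in \mathcal{A}_s \cup \mathcal{B}_s$, so that (\ref{eq:f}) holds verbatim. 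The target inequality $\Pr[I \in \mathcal{F}_s] \le \frac{r-s}{2n}$, and hence the final telescoping computation bounding $|\mathcal{F}|$ by $|\mathcal{F}(x_1)|$, would be unchanged; only the per-configuration estimates (\ref{eq:fff1})--(\ref{eq:fff2}) need re-examination.

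The key observation is that the factor of two is spent in exactly one place: Lemma~\ref{lem:katona}(iii), which underlies (\ref{eq:fff2}) and demands $n \ge 2(t+u)$ to control the type~III configurations. Since $t + u = r - s$ throughout $\mathcal{A}_s$, the worst case $s = 0$ forces $n \ge 2r$. By contrast, cases (i) and (ii), which support (\ref{eq:fff1}), require only $n \ge r - s$ (taking $t = r-s,\ u = 0$ and $t = u = \tfrac{r-s}{2}$ respectively), so they already hold whenever $r \le n$. Thus the conjecture would follow from a \emph{sharper cycle lemma}: for integers $t > u \ge 1$ and $n \ge t + u$, every intersecting $\mathcal{B} \subseteq \mathcal{C}(t,u)$ still satisfies $|\mathcal{B}| \le 2(t+u)$ while $|\mathcal{C}(t,u)| = 4n$. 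This single statement is what I would set out to prove, since it keeps the ratio $\frac{2(t+u)}{4n} = \frac{r-s}{2n}$ intact.

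The hard part will be precisely this strengthening, and I expect it to require genuinely new combinatorics rather than a cosmetic tweak. The present proof of (iii) is indirect: it collapses each pair $\{C_1^i, C_2^i\}$ to its union $D^i$, observes $D^i = D^{n+i}$, and then applies case~(ii) to the family $\mathcal{D}'$ of unions---but case~(ii) itself needs $n \ge 2(t+u)$, so the union-collapse is exactly what reintroduces the factor. I would therefore attempt a \emph{direct} cycle argument on the $4n$ sets $C^j_i(t,u)$: fix one member of $\mathcal{B}$ and partition the sets meeting it into blocks that are pairwise non-cross-intersecting, using the diametric (good) placement of each sibling pair $\{x_i, z_i\}$ to certify which translates can coexist. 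The difficulty is that once $t + u$ exceeds $n/2$, the intervals $T_i$ and $U_i$ occupy more than a quarter of the circle of circumference $2n$, so a translate and its sibling-image begin to overlap; the clean interval-pairing driving Katona's argument breaks down, forced intersections appear that the union-collapse cannot detect, and even the structural count $|\mathcal{C}(t,u)| = 4n$ must be re-established in this regime.

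As an alternative to strengthening (iii), one might instead redesign the system $\mathcal{A}_s \cup \mathcal{B}_s$ of special configurations so that every type~III independent set is realised only by configurations amenable to a case-(i)/(ii)-type bound, sidestepping (iii) altogether; the binding constraint, however, is that any redesigned system must still render $I$ uniform on $\mathcal{I}^{(r)}_G(s)$, which is exactly what fixes the weights $f(C)$ and the configuration multiplicities. Reconciling a factor-free cycle bound with this uniformity requirement is, I believe, where the real work lies, and it is plausible that the honest threshold---together with the precise regime in which $\mathcal{F}(x_1)$ remains the largest star, given that $\alpha(G) = 2n$ while the conjecture halts at $r = n$---will only emerge once this tension is resolved.
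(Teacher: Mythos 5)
First, a framing point: this statement is not proved in the paper at all --- it is stated as an open conjecture, and Theorem~\ref{thm:main} covers only the range $2r\le n$. So there is no ``paper proof'' to compare against, and your proposal has to stand on its own. As you yourself concede, it is a research plan rather than a proof: everything is delegated to an unproven ``sharper cycle lemma'' asserting that for $t>u\ge 1$ and $n\ge t+u$, every intersecting $\mathcal{B}\subseteq{_\sigma}\mathcal{C}(t,u)$ satisfies $|\mathcal{B}|\le 2(t+u)$. Your diagnosis of where $2r\le n$ enters is accurate (only Lemma~\ref{lem:katona}(iii) needs it; parts (i), (ii) and the uniform-sampling setup survive under $r\le n$), but locating the bottleneck is not the same as removing it.

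The more serious problem is that the sharper lemma you propose is \emph{false}, so the route ``keep the architecture, strengthen (iii)'' cannot work as stated. Take $s=0$, $t=3$, $u=1$, $n=5$ (so $n\ge t+u=4$ but $n<2(t+u)=8$) and $\sigma=\langle 1,\dots,10\rangle$, which the paper notes is good. Then, with indices modulo $10$, $C_1^j(3,1)=\{j+1,j+2,j+3,j+6\}$ and $C_2^j(3,1)=\{j+4,j+7,j+8,j+9\}$. A direct check shows that $C_1^j$ is disjoint from exactly two members of $\mathcal{C}(3,1)$, namely $C_2^j$ and $C_2^{j+1}$; in particular the $2n=10$ sets $C_1^1,\dots,C_1^{10}$ pairwise intersect, so they form an intersecting subfamily of size $10>8=2(t+u)$. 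The phenomenon is general: $C_1^j$ and $C_1^{j+d}$ are disjoint only when $t\le d\le n-t$ or $n+t\le d\le 2n-t$, so whenever $n<2t$ \emph{all} $2n$ translates of $C_1$ pairwise intersect, and the claimed bound fails throughout the regime $t+u<n<2t$. This is precisely the regime your reduction must handle: for $s=0$ and $n/2+1<r<n$, the family $\mathcal{A}_0$ contains shapes with $t>n/2$ and $t+u=r<n$, and for those shapes the conditional probability in (\ref{eq:fff2}) can be as large as $2n/4n=1/2$ rather than $(r-s)/(2n)$. You half-anticipated this (``forced intersections appear''), but the correct conclusion is not that the lemma is hard --- it is that the lemma is wrong, so any proof of the conjecture must either redesign the configuration system $\mathcal{A}_s\cup\mathcal{B}_s$ (your alternative suggestion, which remains entirely speculative and is tightly constrained by the uniformity requirement), or exploit global constraints linking different shapes and the $Y$-part, rather than a per-configuration Katona-type bound.
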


Perhaps another way to generalise Theorem \ref{thm:main} is to consider a larger class of related graphs. To be more precise, for a positive integer $k$, we define a \emph{$k$-claw} to be the tree in which the root has degree $k$ and every other vertex degree one. For example, a $2$-claw is simply a path of length 2. 

\begin{problem}\label{problem1}
For any integers $k, n, r$, such that $k \geq 3$ and $2r \leq n$, show that if $G$ is the vertex-disjoint union of $n$ $k$-claws, then $G$ is $r$-EKR.  
\end{problem}

Although we suspect that the approach in this paper can be adapted to address Problem \ref{problem1}, there are a few complications that arise which do not allow for straightforward alterations. 

One of the interesting consequences of studying the Erd\H{o}s--Ko--Rado question for families of independent sets of graphs is the connection to a famous, longstanding conjecture of Chv\'atal \cite{chvatalc} concerned with \textit{hereditary} families of subsets of a finite set. A family $\mathcal{H}\subseteq 2^{[n]}$ is hereditary if $A\in \mathcal{H}$ and $B\subseteq A$ implies that $B\in \mathcal{H}$. The conjecture then states that no intersecting subfamily of $\mathcal{H}$ is larger than the largest star of $\mathcal{H}$.

It is easy to see that for graph $G$ and $r\geq 1$, the family of all independent sets of size at most $r$, denoted by $\mathcal{I}^{(\leq r)}_G$, is a hereditary family. A simple corollary of  \ref{thm:main} is that Chv\'atal's conjecture is true for $\mathcal{I}^{(\leq r)}_G$, where $G$ is the vertex-disjoint union of $n$ paths, each of length $2$, and $r\leq n/2$. Conversely, a corollary of Chv\'atal's  conjecture is that for any graph $G$, no intersecting family of independent sets of $G$ is larger than the largest intersecting family of independent sets of $G$ each containing some particular vertex.

\section*{Acknowledgements}

The authors are indebted to both referees for several helpful suggestions which greatly improved the presentation. 
Carl Feghali was supported by grant 19-21082S of the Czech Science Foundation.

\bibliography{bibliography}{}
\bibliographystyle{abbrv}

\end{document}